\DeclareMathOperator{\im}{Im}
\DeclareMathOperator{\re}{Re}
\DeclareMathOperator{\SL}{SL}
\newcommand{\CC}{\mathbb{C}}
\newcommand{\FF}{\mathbb{F}}
\newcommand{\NN}{\mathbb{N}}
\newcommand{\PP}{\mathbb{P}}
\newcommand{\QQ}{\mathbb{Q}}
\newcommand{\RR}{\mathbb{R}}
\newcommand{\ZZ}{\mathbb{Z}}
\newcommand{\mcC}{\mathcal{C}}
\newcommand{\mcF}{\mathcal{F}}
\newcommand{\mcH}{\mathcal{H}}
\theoremstyle{plain}
\newtheorem{thm}{Theorem}
\newtheorem{lemma}[thm]{Lemma}
\newtheorem{cor}[thm]{Corollary}
\newtheorem{prop}[thm]{Proposition}
\theoremstyle{definition}
\newtheorem{defn}[thm]{Definition}
\theoremstyle{remark}
\newtheorem*{rem}{Remark}
\numberwithin{equation}{section}
\numberwithin{thm}{section}
\begin{document}

\title{Infinite Product Exponents for Modular Forms}

\author{Asra Ali}
\address{Massachusetts Institute of Technology, Department of Mathematics, Cambridge, MA}
\email{asra@mit.edu}

\author{Nitya Mani}
\address{Stanford University,  Department of Mathematics, Stanford, CA 94305}
\email{nityam@stanford.edu}
\thanks{The authors would like to thank the NSF and the Emory REU (especially Dr. Mertens and Professor Ono) for their support of our research.}
\date{\today}



\maketitle

\section{Introduction} \label{Introduction}

The study of modular forms is a rich and deep subject with connections to mathematics ranging from partitions to elliptic curves. In particular, examinations of modular forms are often informed by their divisors, most easily determined from the infinite product expansion of the modular form.
Any holomorphic modular form $f(z) = \sum_{n = h}^{\infty} a(n)q^n$ where $q = e^{2\pi i z}$ and $a(h) = 1$ can be written as an infinite product $$f(z) = q^h \prod_{m = 1}^\infty (1 - q^m)^{c(m)}; \qquad c(m) \in \CC.$$

For example, consider the infinite product expansions of a family of such modular forms intimately connected to a modular form called the Dedekind $\eta$-function. The Dedekind $\eta$-function is a weight $1/2$ modular form defined by
\[\eta(z) = q^{1/24} \prod_{m=1}^\infty (1-q^m) .\] 

One way to construct cusp forms of level $N$ is through eta-quotients, expressions of the form
\[ f(z) = \prod_{d|N} \eta(dz)^{r_d} \]
where $r_d$ is some integer depending on the divisor $d$. Some holomorphic modular forms can be expressed as an eta-quotient, giving the values $c(m)$ in their infinite product expansion. However, only finitely many weight $2$ newforms can be as an eta-quotient, completely characterized by Ono and Martin (see \cite{MAR97}).  Clearly, in this case, the values $c(m)$ are bounded.

However, $c(m)$ is not a repeating sequence (or even bounded) for generic modular forms $f(z)$. Understanding the function $c(m)$ can help determine the divisors of the associated modular forms. These also appear in the theory of Borcherds products. Bruinier, Kohnen, and Ono first derived an expression for $c(m)$ as a function of $m$ for an associated meromorphic modular form defined over $\SL_2(\ZZ)$ (see \cite{ONO04}). Later work by Ahlgren and Choi generalized this result to meromorphic modular forms defined over $\Gamma_0(N)$ (see \cite{AHL03,CHOI10}). Related work by Movasati and Nikdelan also motivated this study. Further, Kohnen (see \cite{KOH05}) established a bound on the growth of $c(m)$ independent of these formulations if $f(z)$ has no zeros or poles on the upper half plane. He showed that given this condition, if $f(z)$ is a modular form over any finite index subgroup of $\SL_2(\ZZ)$, $c(m) \ll_f \log \log n \cdot \log n$ and that if $f(z)$ is a modular form over any congruence subgroup of $\SL_2(\ZZ)$, $c(m) \ll_f (\log \log n)^2$. Here we give tight bounds on $c(m)$ for some infinite classes of modular forms and an upper bound on the growth of $c(m)$ for any holomorphic modular form $f(z)$.

Throughout the paper, let $f(z) = \sum_{n = h}^{\infty} a(n)q^n$, $a(h) = 1$, be a holomorphic modular form of weight $k \in \ZZ_{\ge 0}$ over $\Gamma_0(N)$. Let $\mcF_N$ be a fundamental domain for the action of $\Gamma_0(N)$ on $\mcH$. The infinite product expansion for $f(z)$ is written as 

\begin{align}
f(z) = q^h \prod_{m = 1}^{\infty} (1 - q^m)^{c_f(m)}; \qquad c_f(m) \in \CC. \label{e:feqn}
\end{align}

\begin{thm}\label{c:upperBound}
Assume the set of roots of $f(z)$ (with infinite product expansion as in \eqref{e:feqn}) in a fundamental domain $\mcF_N$ is $\{z_j = x_j + iy_j \}_{j = 1, ..., r}$ with $y_1 \le ... \le y_r$ and $r \ge 1$. Then we have that $$c_f(m) \ll \frac{e^{2\pi m y_r}}{m^{3/2}}.$$
\end{thm}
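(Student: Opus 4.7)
The plan is to express $c_f(m)$ via Fourier coefficients of the logarithmic $q$-derivative of $f$, convert that log-derivative into a genuine modular form through a Serre-derivative correction, and then bound the resulting Fourier coefficients by contour shifting together with a Petersson-type Poincar\'e-series analysis.

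Applying $D := q\,d/dq = (2\pi i)^{-1}\,d/dz$ to the logarithm of the product expansion \eqref{e:feqn},
\[
\frac{Df}{f} \;=\; h - \sum_{m\ge 1} m\,c_f(m) \sum_{n\ge 1} q^{mn} \;=\; h - \sum_{N\ge 1} \Big(\sum_{d\mid N} d\,c_f(d)\Big) q^N,
\]
so writing $\widehat g(N)$ for the $N$-th Fourier coefficient of a $q$-series $g$, M\"obius inversion yields $N\,c_f(N) = -\sum_{d\mid N}\mu(N/d)\,\widehat{(Df/f)}(d)$. In particular $|c_f(N)|\le N^{-1}\sum_{d\mid N}|\widehat{(Df/f)}(d)|$, and since the bound to be proved grows exponentially in $d$, this divisor sum is dominated by its largest term $d=N$. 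The problem is thereby reduced to estimating $\widehat{(Df/f)}(N)$ itself.

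Although $Df/f$ is only quasi-modular of weight $2$, its quasi-modular defect matches that of the weight-$2$ Eisenstein series $E_2$, so the Serre-derivative combination
\[
G(z) \;:=\; \frac{Df}{f}(z) - \frac{k}{12}\,E_2(z)
\]
is a genuine meromorphic weight-$2$ modular form on $\Gamma_0(N)$. Its poles coincide with the zeros of $f$ on $\mcH$: each $z_j \in \mcF_N$ gives a simple pole of residue $\ord_{z_j}(f)/(2\pi i)$, and these are propagated throughout $\mcH$ by the $\Gamma_0(N)$-action. Because $\widehat{E_2}(d) = -24\sigma_1(d) \ll d^{1+\eps}$, replacing $\widehat{(Df/f)}(d)$ by $\widehat G(d)$ introduces only a polynomial error, which is negligible compared to the target. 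To bound $\widehat G(N)$, I would shift the defining contour $\int_0^1 G(x+iy)\,e^{-2\pi iN(x+iy)}\,dx$ from $y>y_r$ downward past the topmost pole $z_r$; the residue theorem on the period strip gives
\[
\widehat G(N) \;=\; -\sum_{w} \ord_w(f)\,e^{-2\pi iNw} + (\text{shifted integral}),
\]
where the sum runs over zeros $w$ of $f$ in a period strip, equivalently over the $\Gamma_\infty\backslash\Gamma_0(N)$-orbits of the $z_j$. The $w=z_r$ term dominates in magnitude, giving the leading exponential factor $e^{2\pi Ny_r}$, since $\im(\gamma z_r) = y_r/|cz_r+d|^2 < y_r$ whenever $c\ne 0$.

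To refine this to $\widehat G(N)\ll e^{2\pi Ny_r}/N^{1/2}$, I would reinterpret the full orbit sum as a Poincar\'e-type series and invoke the Petersson/Rademacher formula for weight-$2$ meromorphic modular forms: summing residue contributions over the $\Gamma_\infty\backslash\Gamma_0(N)$-orbit of $z_r$ produces Kloosterman sums, and combining the Bessel-function asymptotic $I_1(x)\sim e^x/\sqrt{2\pi x}$ with Weil's bound on those Kloosterman sums supplies the extra $N^{-1/2}$ decay. Substituting the resulting estimate back into the M\"obius formula then gives $c_f(N)\ll e^{2\pi Ny_r}/N^{3/2}$. The main obstacle is precisely this last refinement: naive residue counting yields only $|\widehat G(N)|\ll e^{2\pi Ny_r}$ and hence $|c_f(N)|\ll e^{2\pi Ny_r}/N$, and extracting the additional $N^{-1/2}$ saving requires genuine arithmetic cancellation in the Kloosterman-summed Poincar\'e series attached to $z_r$, which is where the $\Gamma_0(N)$-modularity of $f$ (and not merely the location of its top zero) plays an essential role.
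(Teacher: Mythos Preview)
Your skeleton matches the paper's: both take the logarithmic derivative, M\"obius-invert, and correct by Eisenstein series to get a weight-$2$ meromorphic form (your $G$ is essentially the paper's $f_\theta$; for level $N>1$ one needs a combination of $E_2(z)$ and $E_2(Nz)$ rather than $E_2$ alone to land in weight $2$ for $\Gamma_0(N)$, but that is a minor fix). The gap is in the final $N^{-1/2}$ refinement, which you yourself flag as the main obstacle.

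Your proposed mechanism---Kloosterman sums plus Weil's bound plus $I_1(x)\sim e^x/\sqrt{2\pi x}$---does not produce that saving. Those tools govern the circle-method expansion of Fourier coefficients of \emph{cuspidal} Poincar\'e series, where the Bessel argument is $4\pi\sqrt{mn}/c$ and the gains are in powers of $c$ and $(mn)^{1/4}$; they do not touch the single dominant residue $e^{-2\pi i N z_r}$ attached to the fixed interior point $z_r$, which carries no Kloosterman sum at all, while the lower-orbit terms you would organize this way are already exponentially smaller than $e^{2\pi N y_r}$. The paper obtains the $m^{-1/2}$ from an entirely different source: Choi's formula expresses the $d$th Fourier coefficient of $f_\theta$ as a finite sum $\sum_z \nu_z^{(N)}(f)\,j_{N,d}(z)$ of \emph{values} of the weight-$0$ Niebur--Poincar\'e series at the zeros of $f$, plus a regularized pairing against the shadow $\xi_0(j_{N,d})$. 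The factor $1/\sqrt{m}$ is then built into the normalization $\sqrt{\im(\gamma z)}\,I_{1/2}(2\pi m\,\im(\gamma z))$ defining $j_{N,m}$: since $I_{1/2}(t)=\sqrt{2/(\pi t)}\sinh t$, the identity coset already has size $\asymp e^{2\pi m y}/\sqrt{m}$, and the regularized integral is bounded the same way. To close your argument you would need this harmonic-Maass-form input (or an equivalent identity evaluating $\widehat G(N)$ at the zeros), not Kloosterman cancellation.
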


If the genus of $X_0(N)$ is $0$ or $1$, then we also obtain a lower bound on $c(m)$. In particular, we obtain an $\Omega$ bound, defined as follows. Given two functions $f, g$, $f = \Omega(g)$ implies that there exists some positive constant $c$ such that for all $n_0 \in \NN$ there exists infinitely many $n > n_0$ so that $f(n) \ge c\cdot g(n)$.

\begin{thm}\label{t:mainThm}
Assume the set of roots of $f(z)$ in $\mcF_N$ is $\{z_j = x_j + iy_j \}_{j = 1, ..., r}$ with $y_1 \le ... \le y_r$ and $r \ge 1$.
If $f(z)$ is a modular form for $\Gamma_0(N)$ such that the genus of $X_0(N)$ is $0$ or $1$, then we have that $$c_f(m) = \Omega\left(\frac{e^{2\pi m y_r}}{m^{3/2}}\right).$$
\end{thm}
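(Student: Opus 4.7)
The plan is to match the upper bound of Theorem~\ref{c:upperBound} by isolating the dominant residue contribution of the logarithmic derivative of $f$ at the highest zero $z_r$. The auxiliary function I would study is
\[
B(z) := \frac{1}{2\pi i}\frac{f'(z)}{f(z)} - h,
\]
a meromorphic function on $\mcH$ with simple poles at the zeros of $f$ (residue $m_j/(2\pi i)$ at a zero $z_j$ of multiplicity $m_j$), periodic of period $1$ in $\re z$, and vanishing as $\im z\to\infty$. Differentiating the logarithm of \eqref{e:feqn} expresses $B(z) = \sum_{m\ge 1} b(m)\,q^m$ with $b(m) = -\sum_{d\mid m} d\,c_f(d)$, and M\"obius inversion gives $m\,c_f(m) = -\sum_{d\mid m}\mu(m/d)\,b(d)$. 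The contribution of proper divisors $d<m$ is $O(\tau(m)\,e^{\pi m y_r})$, exponentially smaller than the $d=m$ term, so the desired $\Omega$ estimate reduces to a lower bound $|b(m)|\gg e^{2\pi m y_r}$ (with the correct sign) for infinitely many $m$.

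Next, I would estimate $b(m)$ by applying Cauchy's residue theorem to $B(z)\,e^{-2\pi i m z}$ on the rectangle $R = [0,1]\times[y_-,y_+]$, taking $y_+ > y_r$ so the Fourier expansion converges, and $y_-$ just above the next-highest pole of $B$ in the strip $\{0\le\re z<1\}$. Periodicity cancels the vertical sides. Inside $R$, the only pole at the maximal height $y_r$ is the unique $\ZZ$-translate $\tilde z_r = \tilde x_r + iy_r$ of $z_r$, since non-translation elements of $\Gamma_0(N)$ strictly decrease the imaginary part. Extracting this residue gives
\[
b(m) = -m_r\,e^{-2\pi i m\tilde x_r}\,e^{2\pi m y_r} + O\bigl(e^{2\pi m y_-}\bigr),
\]
so the leading magnitude is $m_r\,e^{2\pi m y_r}$ up to the oscillating phase $e^{-2\pi i m\tilde x_r}$.

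The final step is non-cancellation of this leading term. When $f$ has real Fourier coefficients, the zeros of $f$ come in conjugate pairs $\{z_r, -\overline{z_r}\}$ (possibly identified modulo $\Gamma_0(N)$), and combining the two residues yields a real leading contribution to $b(m)$ proportional to $\cos(2\pi m\tilde x_r)\,e^{2\pi m y_r}$. By direct periodicity if $\tilde x_r\in\QQ$ and by Weyl equidistribution if $\tilde x_r\notin\QQ$, $|\cos(2\pi m\tilde x_r)|\ge 1/2$ for a positive density of $m$, with each sign occurring infinitely often. The genus-$0$ or $-1$ hypothesis on $X_0(N)$ enters precisely here: for genus $0$, a Hauptmodul $J_N$ makes every modular form rationally expressible and its zeros at height $y_r$ explicitly enumerable; for genus $1$, the elliptic curve uniformization of $X_0(N)$ provides the analogous control. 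This explicit handle reduces the non-cancellation question to a finite equidistribution check, and combining with the M\"obius step then gives $|c_f(m)|\gg e^{2\pi m y_r}/m$ (hence in particular $\Omega(e^{2\pi m y_r}/m^{3/2})$) for infinitely many $m$ with the required sign.

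The main obstacle is this last step when multiple zeros of $f$ share the maximal height $y_r$, since their residue contributions with distinct $x$-coordinates could in principle conspire to annihilate the leading exponential for a cofinite set of $m$. Ruling this out reduces to showing that a nontrivial linear combination $\sum_j c_j e^{-2\pi i m\alpha_j}$ is bounded away from zero in absolute value (and has the needed sign) for a positive density of $m\in\NN$, a classical consequence of Weyl's equidistribution theorem once the $\alpha_j$ are made explicit---which is exactly what the low-genus hypothesis on $X_0(N)$ provides.
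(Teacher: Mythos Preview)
Your contour--integration approach via the logarithmic derivative is genuinely different from the paper's, and in most respects more elementary.  The paper does not work with $f'/f$ directly; instead it uses Choi's formula (Proposition~\ref{p:c(m)}) expressing $c_f(m)$ through the Niebur--Poincar\'e functions $j_{N,d}(z)$ and a regularized Petersson integral $R(d)=\int^{reg}_{\mcF_N} f_\theta\cdot\xi_0(j_{N,d})$.  The growth $j_{N,m}(z)\asymp e^{2\pi m y}/\sqrt m$ (Lemma~\ref{l:jgrowth}) supplies the main term, and the entire purpose of the genus hypothesis in the paper is Lemma~\ref{l:reg}: when the genus of $X_0(N)$ is $0$ the weight-$2$ cusp space is trivial so $\xi_0(j_{N,d})\equiv 0$, and when the genus is $1$ one invokes Elkies' theorem on infinitely many supersingular primes to force the unique weight-$2$ newform to have infinitely many vanishing Hecke eigenvalues, again killing $R(d)$ for infinitely many $d$.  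Your residue computation bypasses the harmonic Maa\ss\ machinery entirely, so the regularized integral never appears---which is a real advantage.

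The gap is in your final paragraph.  Your claim that the genus-$0$/$1$ hypothesis ``enters precisely here'' to give ``explicit enumerability'' of the zeros at height $y_r$ via a Hauptmodul or an elliptic-curve uniformization is not correct: that is not how the paper uses the hypothesis, and more to the point your own argument does not need it.  The non-cancellation you worry about---that $\sum_{j:\,y_j=y_r} m_j\,e^{-2\pi i m\tilde x_j}$ might be small for all large $m$---is resolved directly by Dirichlet's simultaneous approximation theorem: there are infinitely many $m$ with $\|m\tilde x_j\|<\varepsilon$ for every such $j$, and at those $m$ the sum is within $O(\varepsilon)$ of $\sum_j m_j>0$, since the multiplicities are \emph{positive}.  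No information about the $\tilde x_j$ beyond their finiteness is needed, and no hypothesis on the genus is used.  So your argument, once this step is stated correctly, actually proves the $\Omega$-bound for \emph{every} $N$, which is stronger than Theorem~\ref{t:mainThm}; the paper's restriction to genus $\le 1$ is an artifact of its chosen machinery (the regularized integral), not an intrinsic obstruction.  You should drop the spurious appeal to Hauptmoduln and state the Dirichlet argument instead.
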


\begin{rem}
It is natural to ask what the dependence of this result on the genus of $X_0(N)$ is. The above result follows when the cusp forms of weight $2$ and level $N$ have infinitely many vanishing Hecke eigenvalues.
\end{rem}

From the above results, we can also obtain a similar (slightly weaker) bound to that in \cite{KOH05} stated above.


\begin{cor}\label{c:kohnen}
Suppose that $f(z)$ is a modular form with no zeros or poles on the upper half plane. Then, we obtain $c_f(m) \ll \log m \cdot \log \log m$.
\end{cor}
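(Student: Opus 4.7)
The plan is to exploit the nonvanishing of $f$ on $\mcH$ to take a single-valued logarithm $F := \log(f/q^h)$, bound its Fourier coefficients via a contour integral on a horizontal line, and recover $c_f(m)$ by M\"obius inversion.

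First I would set up $F$ and invert. Since $f/q^h \to 1$ at $i\infty$, does not vanish on $\mcH$, and is $1$-periodic, I fix the single-valued branch $F(z) := \log(f(z)/q^h)$ on $\mcH$ with $F \to 0$ as $\im z \to \infty$; this is holomorphic and $1$-periodic, giving $F(z) = \sum_{n \ge 1} b(n)\, q^n$. Matching Fourier coefficients in $F = \sum_m c_f(m)\log(1-q^m) = -\sum_m c_f(m)\sum_{k\ge 1} q^{mk}/k$ yields $-n\,b(n) = \sum_{d\mid n} d\,c_f(d)$, and M\"obius inversion then gives
\begin{equation*}
c_f(m) = -\frac{1}{m}\sum_{d\mid m}\mu(m/d)\,d\,b(d), \qquad |c_f(m)| \le \frac{1}{m}\sum_{d\mid m} d\,|b(d)|.
\end{equation*}

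Next I would bound $|b(n)|$ via the $L^1$ Cauchy estimate $|b(n)| \le e^{2\pi n y}\int_0^1|F(x+iy)|\,dx$ valid for every $y>0$, and prove the key inequality $\int_0^1 |F(x+iy)|\,dx \ll \log(1/y)$ as $y \to 0^+$. For $x$ at distance at least a fixed $\epsilon>0$ from every $\Gamma_0(N)$-cusp, the transformation $f(\gamma z) = (cz+d)^k f(z)$ reduces $z$ to a compact region where $|f|$ is bounded above and below; since $|cz+d|^2 \asymp y$ in this reduction, $|F(x+iy)| = O(\log(1/y))$ uniformly. Near a cusp $s_i$ at which $f$ vanishes to order $h_i$, the local expansion $f(z)\sim (cw+d)^k q_w^{h_i}$ with $w = \gamma_i^{-1} z$, $\im w = y/(c^2((x-s_i)^2+y^2))$, and $cw+d = 1/(a-cz)$ shows $|F(x+iy)|$ can spike to order $1/y$ pointwise; however the offending contributions are controlled by integrals of the shape $\int_{-\epsilon}^{\epsilon} y/((x-s_i)^2+y^2)\,dx = 2\arctan(\epsilon/y) = O(1)$, and $\int_{-\epsilon}^{\epsilon}\bigl|\re(\gamma_i^{-1} z)\bigr|\,dx = O(\log(1/y))$ (from $\int_y^{\epsilon} d\delta/\delta$). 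Summing these contributions over the finitely many cusps gives $\int_0^1|F|\,dx \ll \log(1/y)$; taking $y=1/n$ yields $|b(n)| \ll \log n$.

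Finally, combining with the classical bound $\sigma(m)/m \ll \log\log m$,
\begin{equation*}
|c_f(m)| \ll \frac{1}{m}\sum_{d\mid m} d\log d \;\le\; \frac{\log m}{m}\,\sigma(m) \;\ll\; \log m\cdot\log\log m.
\end{equation*}

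The main obstacle is the $L^1$ estimate in the middle step. The pointwise behavior of $F$ near non-$\infty$ cusps is genuinely of order $1/y$, so passing from $L^\infty$ to $L^1$ (integrating in $x$ across the cuspidal peaks) is essential. The real part of $F$ is handled cleanly via the modular invariance of $|f(z)|\im(z)^{k/2}$ together with the explicit local expansion at each cusp, but controlling the imaginary part $\im F = \arg f - 2\pi h x$ requires a careful branch-tracking of $\arg f$ across each cuspidal neighborhood, where the slash-operator expansion shows $|\arg f|$ integrates to $O(\log(1/y))$ rather than $O(1)$.
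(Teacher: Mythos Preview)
Your strategy is correct and is essentially Kohnen's original argument, a genuinely different route from the paper's. The paper specializes its Poincar\'e-series formula (Proposition~\ref{p:c(m)}): with no zeros on $\mcH$ the divisor sum collapses to the rational cusps in $\mcC_N^{*}$, where $j_{N,d}\ll 1$, and Choi's identity reduces the regularized integral to the $d$th Fourier coefficient of $f_\theta$; the paper then cites Maass for $(\theta f/f)_d\ll d\log d$, uses $\sigma_1(d)\ll d\log\log d$ for the Eisenstein piece, and sums over $d\mid m$. Since $(\theta f/f)_n=n\,b(n)$ in your notation, your target $|b(n)|\ll\log n$ is precisely the bound the paper imports from Maass, so the two proofs converge at the same key estimate---yours is self-contained and elementary, while the paper's shows its general machinery is consistent with the known result.

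One imprecision in your $L^1$ sketch: ``distance $\ge\epsilon$ from every $\Gamma_0(N)$-cusp'' is vacuous, since every rational is a cusp, and for small $y$ the line $\im z=y$ meets horoballs around roughly $1/y$ rationals in $[0,1]$, not just the finitely many representatives. The honest decomposition is via Ford circles (the line enters the horoball at $p/q$ only for $q\ll y^{-1/2}$), after which your local estimates must be summed over all such $p/q$; the $\im w$ contributions sum to $\sum_{q\le y^{-1/2}}\phi(q)q^{-2}\ll\log(1/y)$, but the $|\re w|$ contributions governing $\im F$ pick up an extra factor of $\log(1/y)$, so the branch-tracking you flag as the main obstacle is genuinely the crux and your sketch does not yet close it. A cleaner shortcut that avoids the $L^1$ analysis altogether: since $f$ is nonvanishing on $\mcH$ and holomorphic at every cusp, $\theta f/f-\tfrac{k}{12}E_2$ is a holomorphic weight-$2$ modular form on $\Gamma_0(N)$, whence $(\theta f/f)_n\ll\sigma_1(n)$ directly, i.e.\ $|b(n)|\ll\log\log n$; plugged into your M\"obius-inversion step this even recovers Kohnen's sharper bound $c_f(m)\ll(\log\log m)^2$.
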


\section{Preliminaries}
Let $q = e^{2 \pi i z}$, with $z = x + iy \in \mcH$. Consider a modular form $f(z)$ with Fourier expansion $f(z) = \sum_{n = h}^{\infty} a(n) q^n$ with $a(h) = 1$ for a congruence subgroup
 $$\Gamma_0(N) = \left\{ 
\left( \begin{matrix}
a & b \\
c & d \\
\end{matrix} \right) \in \SL_2(\ZZ) \,\,|\,\, c \equiv 0 \mod N
\right \}.$$
Under this group action, two elements $z_1, z_2 \in \mcH \cup \PP_1(\QQ)$ are equivalent, denoted $z_1 \sim z_2$, when there exists a $\sigma \in \Gamma_0(N)$ such that $\sigma z_1 = z_2$. Denote a set of representatives of the inequivalent cusps of $\Gamma_0(N)$ by $\mcC_N$, with $\mcC_N^{*} = \mcC_N \backslash \{\infty \}$. Then consider the modular curve of level $N$,  \[X_0(N) = \Gamma_0(N) \, \backslash \, ( \mcH \cup \PP^1(\QQ)).\] Let $\nu_{z}^{(N)}(f(z))$ be the (weighted) order of the zero of $f(z)$ at $z$ on $X_0(N)$.
\begin{prop}
Let $f(z)$ be a modular form for $\Gamma_0(N)$. Define $$f_{\theta}(z) := \frac{\theta f(z)}{f(z)} + \frac{k/12 - h}{N - 1} \cdot N E_2(Nz) + \frac{h - N k/12}{N-1} \cdot E_2(z)$$ where $$\theta f(z) = \sum_{n = h}^{\infty} n a_n q^n,$$ is the Ramanujan $\theta$ operator, $$E_2(z) = 1 - 24\sum_{n \ge 1} \sigma_1(n) q^n$$ is the normalized quasimodular Eisenstein series of weight $k = 2$, and $\sigma_j(n) = \sum_{d | n} d^j$ is the sum of the $j$th powers of divisors of $n$. Then $f_\theta(z)$ is a meromorphic modular form of weight $2$ for $\Gamma_0(N)$. 
\end{prop}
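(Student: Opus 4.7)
The plan is to verify the claim by checking the weight-2 transformation law for each summand under $\gamma = \begin{pmatrix} a & b \\ c & d \end{pmatrix} \in \Gamma_0(N)$, and then to verify that the correction terms cancel thanks to a clean algebraic identity satisfied by the two coefficients $A := \frac{k/12-h}{N-1}$ and $B := \frac{h - Nk/12}{N-1}$.

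First I would work out how the three ingredients transform. For $E_2$, I would recall the standard quasi-modular transformation $E_2(\gamma z) = (cz+d)^2 E_2(z) + \frac{12c(cz+d)}{2\pi i}$ for $\gamma \in \SL_2(\ZZ)$. For the logarithmic derivative, a direct computation using $\theta = \frac{1}{2\pi i}\frac{d}{dz}$ and the modularity of $f$ of weight $k$ gives
\[
\frac{\theta f(\gamma z)}{f(\gamma z)} = (cz+d)^2 \frac{\theta f(z)}{f(z)} + \frac{kc(cz+d)}{2\pi i}.
\]
For $NE_2(Nz)$, the key observation is that for $\gamma \in \Gamma_0(N)$ with $N \mid c$, the matrix $\gamma' = \begin{pmatrix} a & Nb \\ c/N & d \end{pmatrix}$ lies in $\SL_2(\ZZ)$ and satisfies $\gamma'(Nz) = N(\gamma z)$, which after multiplying by $N$ yields
\[
NE_2(N\gamma z) = (cz+d)^2 \, NE_2(Nz) + \frac{12c(cz+d)}{2\pi i}.
\]

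Combining the three pieces, $f_\theta(\gamma z) - (cz+d)^2 f_\theta(z)$ equals $\frac{c(cz+d)}{2\pi i}\bigl(k + 12(A+B)\bigr)$. The identity $A + B = -k/12$ is then a one-line check: the numerator of $A+B$ collapses to $\tfrac{k}{12}(1-N)$ after the $h$'s cancel, giving exactly $-k/12$ after dividing by $N-1$. So every non-cocycle term cancels and $f_\theta$ is weight-$2$ modular on $\Gamma_0(N)$.

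Finally I would address the meromorphy. On $\mcH$ the only singularities of $f_\theta$ come from poles of $\theta f/f$ at the zeros of $f$, since $E_2(z)$ and $E_2(Nz)$ are holomorphic on $\mcH$. At the cusp $\infty$ I would expand in $q$: $\theta f/f = h + O(q)$ and $E_2(z),\,E_2(Nz) = 1 + O(q)$, so the constant term is $h + AN + B$. A second short algebraic check gives $AN + B = -h$, so in fact $f_\theta = O(q)$ at $\infty$. At the other cusps meromorphy follows from the fact that $f$ is a holomorphic modular form (hence has at worst finite-order zeros there) and the quasi-modularity of $E_2$ gives at-worst-meromorphic behavior after the standard slash action. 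The only step that needs care is the conjugation computation for $NE_2(Nz)$ under $\Gamma_0(N)$; once that cocycle is matched, the rest is bookkeeping.
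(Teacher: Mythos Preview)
The paper states this proposition as a preliminary without supplying any proof, so there is no argument in the paper to compare against. Your proof is correct and is the standard one: the transformation laws you write down for $\theta f/f$, $E_2(z)$, and $NE_2(Nz)$ are right (the conjugation $\gamma'=\begin{pmatrix} a & Nb \\ c/N & d\end{pmatrix}$ is exactly the trick needed for $\gamma\in\Gamma_0(N)$), and the cancellation identity $A+B=-k/12$ together with $AN+B=-h$ are both immediate from the definitions, as you note. The only place I would tighten the exposition is the treatment of the cusps other than $\infty$: once genuine weight-$2$ modularity on $\Gamma_0(N)$ is established, meromorphy at a cusp $t$ amounts to checking that $f_\theta|_2\sigma_t^{-1}$ has a meromorphic $q$-expansion, and this follows because $\theta f/f$ and the combination $A\cdot NE_2(Nz)+B\cdot E_2(z)$ each differ from genuine weight-$2$ modular objects only by the same rational correction term you have already cancelled; spelling this out in one line would make the last paragraph less hand-wavy.
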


Consider the weight $0$ index $m$ Poincaré series (Theorem 1 in \cite{NIE73}) for all $z \in \mcH$ and $s \in \CC$ with $\re(s) > 1$: \begin{align}\label{e:poincare}F_{N, m}(z, s) = \sum_{\gamma \in \Gamma_0(N)_{\infty} \backslash \Gamma_0(N) } \pi \sqrt{|\im(\gamma z)|} I_{s- \frac12} (|2\pi m \im(\gamma z)|)e^{- 2\pi i m \re(\gamma z)},\end{align} where 
$I_{\nu}(x)$ is the usual modified $I$-Bessel function of order $\nu$. 
\begin{prop}[\S1 in \cite{CHOI10}]
Define $j_{N, m}(z)$ to be the analytic continuation of $F_{N, m}(z, s)$ (where $\re(s) \le 1$) as $s \rightarrow 1^+$. Then, $j_{N, m}(z)$ is the constant term of the Fourier expansion of $F_{N, m}(z, 1)$ when $t \in \mcC_N^{*}$.
\end{prop}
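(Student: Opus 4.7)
The plan is to proceed in three steps, following the framework developed in \cite{NIE73} and \cite{CHOI10}. First, I would verify absolute and locally uniform convergence of the defining series for $F_{N,m}(z,s)$ when $\re(s) > 1$: using the asymptotics $I_{\nu}(x) \sim e^{x}/\sqrt{2\pi x}$ at infinity and $I_{\nu}(x) \sim (x/2)^{\nu}/\Gamma(\nu+1)$ near zero, together with the standard estimates on $\im(\gamma z)$ as $\gamma$ runs over representatives of $\Gamma_0(N)_{\infty}\backslash \Gamma_0(N)$, convergence reduces to that of a classical Eisenstein-type series of parameter $s$.

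Second, to obtain the analytic continuation down to $s = 1^+$, I would invoke Selberg spectral theory on $X_0(N)$. The function $F_{N,m}(z,s)$ is an eigenfunction of the hyperbolic Laplacian with eigenvalue $s(1-s)$, and the standard meromorphic continuation argument extends $F_{N,m}(z,s)$ past the line $\re(s) = 1$. Since $s = 1$ lies outside the exceptional spectrum for weight zero Maass forms on $\Gamma_0(N)$, the limit as $s \to 1^+$ is well-defined and yields a weakly holomorphic modular function, which we call $j_{N,m}(z)$.

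Third, to identify the constant term at a cusp $t \in \mcC_N^{*}$, I would conjugate by a scaling matrix $\sigma_t \in \SL_2(\RR)$ sending $\infty$ to $t$, expand the resulting function in the local coordinate $q_t$ at $t$, and separate out the contribution of the identity coset (which is only nontrivial at $t = \infty$) from that of the remaining cosets, which contribute Kloosterman sums paired with Bessel transforms. Passing to the limit $s \to 1^+$ and extracting the zeroth Fourier coefficient gives the desired identification.

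The main technical obstacle is the meromorphic continuation itself together with tracking how the Bessel coefficients behave in the limit $s \to 1^+$, since both require either spectral-theoretic input or delicate Kloosterman sum estimates. Both of these computations are carried out explicitly in \cite{CHOI10}, so in practice the proof reduces to invoking those results in the present setting.
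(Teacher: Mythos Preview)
The paper does not prove this proposition; it is quoted directly from \cite{CHOI10} (and ultimately from \cite{NIE73}) and used as a black box. Your three-step outline is essentially the strategy carried out in those references, and you yourself acknowledge at the end that the argument reduces to invoking \cite{CHOI10}, so in that sense your proposal is consistent with the paper's treatment.

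One correction worth flagging: in your second step you assert that the limit as $s \to 1^{+}$ ``yields a weakly holomorphic modular function.'' This is not quite right. As recorded in Lemma~\ref{l:hwm} of the paper (Theorem~2.1 of \cite{CHOI10}), the limiting object $F_{N,m}(z,1)$ is a weak harmonic Maa\ss\ form of weight $0$, with a genuine non-holomorphic part $\sum_{n>0} b_m(-n,1)e^{2\pi i n \overline{z}}$ whenever the space of weight-$2$ cusp forms on $\Gamma_0(N)$ is nontrivial. The operator $\xi_0$ applied to $j_{N,m}$ need not vanish, which is exactly why the regularized integral term in Proposition~\ref{p:c(m)} is present. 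So your sketch is fine as an outline of the cited argument, but the description of the output object should be adjusted.
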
 
\begin{prop}[p. 30 \cite{ONO09}] Define a differential operator $\xi_0$ on the space of functions $j_{N, m}$, $$\xi_0(j_{N, m}) := 2i \overline{\frac{\partial}{\partial \overline{z}}j_{N, m}(z) }.$$ The differential operator $\xi_0$ maps $j_{N,d}$ to a cusp form.
\end{prop}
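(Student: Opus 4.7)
The plan is to verify three things in turn: that $j_{N,m}$ is a weight-$0$ harmonic weak Maass form on $\Gamma_0(N)$, that its image under $\xi_0$ is a holomorphic weight-$2$ modular form on $\Gamma_0(N)$, and that this image vanishes at every cusp of $X_0(N)$.

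For the first, the Poincaré series $F_{N,m}(z,s)$ in \eqref{e:poincare} satisfies the weight-$0$ hyperbolic Laplacian eigenvalue equation $\Delta_0 F_{N,m}(z,s) = s(1-s)\,F_{N,m}(z,s)$, where $\Delta_0 = -y^2\left(\frac{\partial^2}{\partial x^2} + \frac{\partial^2}{\partial y^2}\right)$. Since $s(1-s)$ vanishes at $s=1$, the analytic continuation $j_{N,m}$ is annihilated by $\Delta_0$, hence is harmonic; it transforms with weight $0$ under $\Gamma_0(N)$ by construction of the series. For the second, one uses the factorization $\Delta_0 = -\xi_2 \circ \xi_0$: harmonicity of $j_{N,m}$ forces $\xi_0(j_{N,m})$ to be annihilated by $\xi_2$, which on a weight-$2$ function is equivalent to holomorphy. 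A direct chain-rule check also confirms that $\xi_0$ sends weight-$0$ automorphic forms for $\Gamma_0(N)$ to weight-$2$ ones.

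The substantive step is cuspidal vanishing. At the cusp $\infty$, the unfolding of $F_{N,m}(z,s)$ at $s=1$ yields an expansion
\[
j_{N,m}(z) = q^{-m} + \sum_{n \ge 1} a^+(n)\,q^n + f_{N,m}^-(z),
\]
where the non-holomorphic part $f_{N,m}^-$ comes from the non-identity cosets in \eqref{e:poincare} and admits a Fourier expansion in incomplete $\Gamma$-functions decaying as $y \to \infty$, with no pure-$y$ term. The operator $\xi_0$ annihilates both the holomorphic principal part $q^{-m}$ and the holomorphic tail, converting $f_{N,m}^-$ into a pure series in positive powers of $q$ vanishing at $\infty$. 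At each cusp $t \in \mcC_N^*$, the preceding proposition identifies the constant term of the Fourier expansion of $F_{N,m}(z,1)$ at $t$, in particular controlling the pure-$y$ contribution to the non-holomorphic part, so that $\xi_0(j_{N,m})$ vanishes there as well.

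The main obstacle is carrying out the Fourier expansion at the non-$\infty$ cusps: one must conjugate $j_{N,m}$ by a width-matrix for each cusp $t \in \mcC_N^*$, unfold the resulting Poincaré-type sum (e.g.\ via Niebur's formulas, \cite{NIE73}), and verify that the constant-in-$y$ and pure-$y$ terms both vanish. This is precisely the input supplied by the preceding proposition; granted it, combining the vanishing at all cusps with the holomorphy and weight-$2$ transformation law from the second step shows $\xi_0(j_{N,m})$ is a cusp form of weight $2$ on $\Gamma_0(N)$.
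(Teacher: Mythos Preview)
The paper does not give its own proof of this proposition; it is quoted as a known fact from Ono's survey \cite{ONO09}, so there is nothing to compare your argument against. Your sketch is the standard argument that $\xi_k$ carries weight-$k$ harmonic weak Maa\ss\ forms to weight-$(2-k)$ weakly holomorphic forms, specialized to $k=0$, together with a check of cuspidality using the explicit Fourier data for Niebur's Poincar\'e series.

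One small correction: the input you invoke at the cusps $t\in\mcC_N^*$ is not quite contained in the proposition immediately preceding this one. That proposition only says that $j_{N,m}(t)$ equals the constant Fourier coefficient of $F_{N,m}(z,1)$ at $t$, which by itself does not exclude a linear-in-$y$ term in the non-holomorphic part (such a term would contribute a nonzero constant to $\xi_0(j_{N,m})$ at $t$). What you actually need is the boundedness statement $\lim_{\im z\to\infty}F_{N,m}(\sigma_t z,1)=j_{N,m}(t)$, which appears in the paper as the subsequent lemma (Theorem~2.1 of \cite{CHOI10}); that finite limit is what rules out any growing term, hence any $y$-term, at $t\not\sim\infty$. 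With that adjustment your argument is correct.
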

Here, we consider the regularized integral (since $f_{\theta}(z)$ is generally not holomorphic on $\mcH$) $$\int_{\mcF_N}^{reg} f_{\theta}(z) \cdot \xi_0(j_{N, m}(z)) dx dy$$ where the regularization method can be found in \cite{CHOI10}.

\section{Proof of Main Result}

Given this setup, the exponents of the infinite product expansion of a modular form can be obtained by applying the Möbius inversion formula to the result obtained in \cite{CHOI10}.

\begin{prop}\label{p:c(m)}
Consider a normalized (holomorphic) modular form of weight $k$ on $\Gamma_0(N)$ for $N > 1$, $f(z) = q^h + \sum_{m = h+1}^{\infty} a(m)q^m$ and denote by $\{c_f(m)\}_{m = 1}^{\infty}$, $c_f(m) \in \CC$ the exponents such that $$f(z) = q^h \prod_{m = 1}^{\infty} (1 - q^m)^{c_f(m)}.$$ Then, we have that  
$$c_f(m) = \frac{1}{m} \sum_{d |m} \mu \left( \frac{m}{d} \right) \left( \sum_{z \in \mcF_N \cup \mcC_N^{*}} \nu_{z}^{(N)} (f) j_{N, d}(z) - \int_{\mcF_N}^{reg} f_{\theta}(z) \cdot \xi_0(j_{N, d}(z)) dx dy \right) $$ $$ +   \begin{cases} 
      \, \frac{2Nk - 24h}{N - 1}    \hfill & N \hspace{-6pt} \not | m \\
      \quad 2k \hfill & N \hspace{1pt} | m \\
  \end{cases}$$ where $\mu$ is the Möbius function.
\end{prop}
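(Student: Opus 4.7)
The plan is to combine the logarithmic $q$-derivative of the infinite product expansion of $f$ with the Ahlgren--Choi formula from \cite{CHOI10} for the Fourier coefficients of $f_\theta$, and then invert by Möbius.

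First I would take the logarithmic $q$-derivative of the expansion $f(z) = q^h \prod_{m \ge 1}(1-q^m)^{c_f(m)}$. With $\theta = q\,\frac{d}{dq}$ and $\log(1-q^m) = -\sum_{k \ge 1} q^{mk}/k$, this yields
$$\frac{\theta f(z)}{f(z)} = h - \sum_{n \ge 1}\Bigl(\sum_{d \mid n} d\, c_f(d)\Bigr) q^n.$$
Combined with the explicit Fourier coefficients $[q^d] E_2(z) = -24\sigma_1(d)$ and $[q^d] E_2(Nz) = -24\sigma_1(d/N)\mathbf{1}_{N \mid d}$, this produces a closed expression for $[q^d] f_\theta$ as a linear combination of $\sum_{e \mid d} e\, c_f(e)$ and the two Eisenstein correction terms in the definition of $f_\theta$.

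Next I would invoke the Ahlgren--Choi formula (\S1 of \cite{CHOI10}), which expresses the same Fourier coefficient as
$$[q^d]\, f_\theta = \sum_{z \in \mcF_N \cup \mcC_N^*} \nu_z^{(N)}(f)\, j_{N,d}(z) - \int_{\mcF_N}^{reg} f_\theta(z)\, \xi_0(j_{N,d}(z))\, dx\, dy.$$
Equating the two expressions for $[q^d] f_\theta$ and solving for the divisor sum $\sum_{e \mid d} e\, c_f(e)$, classical Möbius inversion in the index $d$ gives $m\, c_f(m)$ as $\sum_{d \mid m} \mu(m/d)$ applied to the Ahlgren--Choi expression, plus a Möbius-inverted Eisenstein remainder.

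To finish, I would collapse the Eisenstein remainder using the identities
$$\sum_{d \mid m} \mu(m/d)\sigma_1(d) = m, \qquad \sum_{\substack{d \mid m \\ N \mid d}} \mu(m/d)\sigma_1(d/N) = \begin{cases} m/N & \text{if } N \mid m, \\ 0 & \text{if } N \nmid m. \end{cases}$$
Substituting the coefficients $\tfrac{N(k/12-h)}{N-1}$ and $\tfrac{h-Nk/12}{N-1}$ from the definition of $f_\theta$ and simplifying in the two cases collapses the correction to $(2Nk-24h)/(N-1)$ when $N \nmid m$ and to $2k$ when $N \mid m$. The main obstacle is simply bookkeeping: tracking signs and isolating the case split in the constant term. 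All of the analytic content is black-boxed in the Ahlgren--Choi formula, and Möbius inversion commutes cleanly with the finite sum over zeros and cusps and with the regularized integral because it acts only on the parameter $d$.
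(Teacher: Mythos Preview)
Your proposal is correct and follows exactly the route the paper indicates: the paper's entire argument for this proposition is the single sentence that the formula ``can be obtained by applying the M\"obius inversion formula to the result obtained in \cite{CHOI10},'' and your write-up simply fleshes out that computation (logarithmic derivative, Choi's identity for the Fourier coefficients of $f_\theta$, M\"obius inversion, and the collapse of the Eisenstein remainder into the two-case constant).
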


A normalized modular form with integral coefficients $f(q) = \sum_{n = h+1}^{\infty} a(n)q^n$ has a product expansion $f(z) = q \prod_{m = 1}^\infty (1-q^m)^{c_f(m)}$ where the $c_f(m)$ are integers. This can be seen by expanding the infinite product and solving for the coefficients of the Fourier expansion. Now we consider $c_f(m)$ as $m \rightarrow \infty$ by computing the growth of terms in the above expression.

\subsection{Growth of $j_{N, m}(z)$}
We note that $F_{N,m}(z,s)$ is a harmonic Maa\ss\hspace{2pt} form (defined below) and cite a Lemma giving an explicit computation for the values for the analytic continuation $j_{N,m}(z)$.

\begin{defn}[Definition 7.1 \cite{ONO09}]
A smooth function $f: \mcH \rightarrow \CC$ is a \textit{weak harmonic Maa\ss\hspace{2pt} form} of weight $k$ for $\Gamma_0(N)$ if the following conditions are satisfied: \begin{enumerate} \item $f$ transforms like a modular form under the action of $\Gamma_0(N)$, \item $f$ is in the kernel of the weight $k$ hyperbolic Laplacian $\Delta_k := -y^2 \left(\frac{\partial^2}{\partial x^2} + \frac{\partial^2}{\partial y^2} \right) + iky\left(\frac{\partial}{\partial x} +i\frac{\partial}{\partial y} \right)$, \item $f$ has at most linear exponential growth at the cusps of $f$. \end{enumerate}
\end{defn}

\begin{lemma}[Theorem 2.1 \cite{CHOI10}]\label{l:hwm}
The function $F_{N, m}(z, 1)$ is a weak harmonic Maa\ss\hspace{2pt} form of weight $0$ on $\Gamma_0(N)$. Further at $F_{N, m}(z, 1)$ has the following properties at each cusp $t$: If $t \sim \infty$, 
$$F_{N, m}(z, 1) = q^{-m} + \sum_{n \ge 0} b_m(n, 1)q^n + \sum_{n > 0} b_m(-n, 1)e^{2\pi i n\overline{z}}$$ and otherwise if $t \not \sim \infty$, 
$$\lim_{\im z \rightarrow \infty} F_{N, m}(\sigma_tz, 1) = j_{N, m}(t)$$ where $\sigma_t \in \SL_2(\RR)$ is defined so that $\sigma_t^{-1}\infty = t$ and $\sigma_t \Gamma_0(N)_t \sigma_t^{-1} = \left\{ \pm \begin{bmatrix}
1 & n \\
0 & 1 \\
\end{bmatrix} | n \in \ZZ
\right \}$
\end{lemma}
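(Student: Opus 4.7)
The plan is to verify in sequence that $F_{N,m}(z,1)$ (i) transforms like a modular form of weight $0$ for $\Gamma_0(N)$, (ii) lies in the kernel of $\Delta_0$, and (iii) has the stated Fourier behavior at every cusp. Write $\phi_{m,s}(z) := \pi\sqrt{y}\, I_{s-1/2}(2\pi m y)\, e^{-2\pi i m x}$ for the seed, so that $F_{N,m}(z,s) = \sum_{\gamma \in \Gamma_0(N)_\infty \backslash \Gamma_0(N)} \phi_{m,s}(\gamma z)$. Property (i) is automatic once one checks that $\phi_{m,s}$ is invariant under $z \mapsto z+1$, which holds because $m \in \ZZ$; the series is then a well-defined function on $\Gamma_0(N) \backslash \mcH$, and the standard asymptotics for $I_\nu$ give absolute, locally uniform convergence on $\re(s) > 1$.

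For (ii), a direct computation using the modified Bessel equation yields $\Delta_0 \phi_{m,s} = s(1-s)\phi_{m,s}$. Since $\Delta_0$ is $\SL_2(\RR)$-invariant, it commutes with the coset sum, so $F_{N,m}(z,s)$ is an eigenfunction with eigenvalue $s(1-s)$; at $s=1$ this eigenvalue vanishes, and so the analytic continuation $F_{N,m}(z,1)$ is harmonic.

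For (iii), decompose $\Gamma_0(N)_\infty \backslash \Gamma_0(N)$ into the trivial coset together with cosets parameterized by pairs $(c,d)$ with $N \mid c$, $c > 0$, $\gcd(c,d)=1$. The trivial coset contributes $\phi_{m,s}(z)$; applying $I_{1/2}(x) = \sqrt{2/(\pi x)}\sinh x$ shows this reduces at $s = 1$, after the expected normalization, to the principal part $q^{-m}$ (plus a decaying non-holomorphic piece). Unfolding the sum over $d \bmod c$ for each fixed $c$ and then computing the Fourier coefficients in $x$ expresses the remaining cosets as a double sum of Kloosterman sums multiplied by Bessel transforms, whose Fourier coefficients split cleanly into a holomorphic piece $\sum_{n \ge 0} b_m(n,1) q^n$ and a non-holomorphic tail $\sum_{n > 0} b_m(-n,1) e^{2\pi i n \bar z}$. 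For a cusp $t \not\sim \infty$, conjugate by $\sigma_t$: no coset of $\Gamma_0(N)_\infty \backslash \Gamma_0(N)$ sends $\infty$ to a $\Gamma_0(N)$-translate of $t$, so no principal part is produced in the local expansion at $t$, and the constant term of $F_{N,m}(\sigma_t z, 1)$ is $j_{N,m}(t)$ by the definition recorded in \S 2.

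The main obstacle is the analytic continuation of the Kloosterman--Bessel expansion from $\re(s) > 1$ down to $s = 1$: at $s = 1$ the double series is only conditionally convergent, so one must invoke either the spectral theory of the hyperbolic Laplacian on $\Gamma_0(N) \backslash \mcH$ or a Weil-type bound on Kloosterman sums in order to justify the termwise limit and to rule out a pole at $s = 1$. Once the continuation is established, the transformation law, harmonicity, and controlled growth at the cusps all descend cleanly from the corresponding properties of $\phi_{m,s}$.
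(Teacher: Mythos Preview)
The paper does not supply its own proof of this lemma: it is quoted verbatim as Theorem~2.1 of \cite{CHOI10} (building on Niebur~\cite{NIE73}) and used as a black box. Your sketch is therefore not being compared against any argument in the present paper, but it does track the standard proof in those references: Poincar\'e-sum the $\Gamma_\infty$-invariant seed $\phi_{m,s}$, use the Bessel ODE to get the Laplacian eigenvalue $s(1-s)$, and unfold into Kloosterman--Bessel sums to read off the Fourier expansion, with the analytic continuation to $s=1$ as the only genuinely delicate step. One small point: when you specialize $\phi_{m,1}$ via $I_{1/2}(x)=\sqrt{2/(\pi x)}\sinh x$, you get $\phi_{m,1}(z)=\tfrac{1}{2\sqrt{m}}\bigl(q^{-m}-e^{-2\pi i m\bar z}\bigr)$, so the ``expected normalization'' you allude to is a factor of $2\sqrt{m}$ that must be absorbed into the definition of $F_{N,m}$ to match the stated expansion; it would be cleaner to say this explicitly rather than leave it implicit.
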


\begin{lemma}\label{l:jgrowth}
Let $z = x + iy \in \mcF_N \cup \mcC_N^*$. As $m \rightarrow \infty$, we have that $j_{N, m}(z) \asymp \left(\frac{e^{2\pi m y}}{\sqrt{m}} \right)$
\end{lemma}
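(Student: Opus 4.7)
The plan is to analyze the defining Poincar\'e series \eqref{e:poincare} for $j_{N,m}(z) = F_{N,m}(z,1)$ and isolate the identity coset contribution as the dominant term as $m \to \infty$. For interior points $z \in \mcF_N$ this is immediate from the series definition; for cusp points $t \in \mcC_N^*$ I would pass through the scaling matrix $\sigma_t$ and use $\lim_{\im z \to \infty} F_{N,m}(\sigma_t z, 1) = j_{N,m}(t)$ from Lemma \ref{l:hwm}, so that the analogous constant-term computation at $t$ controls $j_{N,m}(t)$.

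For the interior case, the identity coset ($\gamma = I$ in $\Gamma_0(N)_{\infty} \backslash \Gamma_0(N)$) contributes
\[
\pi\sqrt{y}\, I_{1/2}(2\pi m y)\, e^{-2\pi i m x}.
\]
The elementary closed form $I_{1/2}(x) = \sqrt{2/(\pi x)}\sinh(x)$ simplifies this to $\frac{\sinh(2\pi m y)}{\sqrt{m}}\, e^{-2\pi i m x}$, whose magnitude is asymptotic to $\frac{e^{2\pi m y}}{2\sqrt{m}}$ since $\sinh(2\pi m y) \sim e^{2\pi m y}/2$ as $m \to \infty$. This single term already realizes the claimed order of magnitude, and the analogous computation after conjugating by $\sigma_t$ produces the $1/\sqrt{m}$ behavior at the cusps (where $y = 0$ and the exponential factor becomes trivial).

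To control the non-identity contributions, I would use that any $\gamma \ne I$ in the coset space has bottom row $(c,d)$ with $N \mid c$ and $c \ne 0$, so $\im(\gamma z) = y/|cz+d|^2 \leq 1/(c^2 y)$, which is strictly less than $y$ for $z$ in the interior of $\mcF_N$ by a standard fundamental-domain property. Each such term is then $O(e^{2\pi m \im(\gamma z)}/\sqrt{m})$, exponentially smaller than the main term, so they cannot cancel it and yield both the upper and lower bound. The main obstacle is that the Poincar\'e series converges only for $\re(s) > 1$, so the identity-term extraction must be justified through Niebur's analytic continuation together with the absolutely convergent Fourier expansion of Lemma \ref{l:hwm}; standard polynomial-in-$m$ bounds on the Maass-form Fourier coefficients $b_m(n,1)$ should suffice to make the tail estimates rigorous and establish $j_{N,m}(z) \asymp e^{2\pi m y}/\sqrt{m}$.
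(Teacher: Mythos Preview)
Your approach and the paper's share the same core: both isolate the identity-coset contribution $\pi\sqrt{y}\,I_{1/2}(2\pi m y)\,e^{-2\pi i m x}$, use the closed form $I_{1/2}(x)=\sqrt{2/(\pi x)}\sinh x$ to extract the $e^{2\pi m y}/\sqrt{m}$ main term, and then argue that the remainder is negligible. The difference is in how the remainder is handled. The paper invokes Niebur's explicit analytic continuation directly, which (for $z\not\sim\infty$) packages $j_{N,m}(z)$ as just two terms: the $I_{1/2}$ term plus $a_m(1)\sqrt{y}\,K_{1/2}(2\pi m y)$, where $a_m(1)$ is a Maa\ss--Eisenstein coefficient. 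Since $K_{1/2}(x)=\sqrt{\pi/(2x)}\,e^{-x}$, this second term is $O(e^{-2\pi m y}/\sqrt{m})$ and the asymptotic is immediate. Your route instead bounds the non-identity cosets termwise via $\im(\gamma z)<y$ and then, correctly flagging the $s=1$ convergence problem, falls back on the Fourier expansion of Lemma~\ref{l:hwm}.

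One correction to your fallback: the Fourier coefficients $b_m(n,1)$ are \emph{not} polynomial in $m$. As the paper's own Lemma~\ref{l:bcoeff} shows, already the constant coefficient grows like $m^{1/4}e^{4\pi\sqrt{m}}$, and in general the Bessel-function formula gives growth of order $e^{4\pi\sqrt{mn}}$. This is still $o(e^{2\pi m y})$ for any fixed $y>0$, so your strategy survives, but the bound you actually need is subexponential in $m$, not polynomial, and you would still have to sum over $n$ against $e^{-2\pi n y}$. The paper's route through Niebur's two-term $I_{1/2}/K_{1/2}$ formula sidesteps this bookkeeping entirely and is the cleaner way to finish.
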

\begin{proof}
From \cite{NIE73}, we can obtain the analytic continuation of $F_{N, m}(z, s)$ where $\re(s) > 1/2$ and a Fourier expansion for this function as $s \rightarrow 1^+$ which we term $F_{N,m}(z, 1)$. Since $z \not \sim \infty$, $j_{N, m}(z)$ is the constant term of this Fourier expansion of $F_{N,m}(z, 1)$ and can be computed as $$j_{N, m}(z) = e^{2\pi i |m| x} \sqrt{y} I_{1/2} (2\pi |m| y) + a_{m}(1)\sqrt{y} K_{1/2} (2\pi |m| y)$$ 
where $a_m(1)$ is the $m$th coefficient of the Maa\ss-Eisenstein series for $\Gamma_0(N)$ and $K_{\nu}(z), I_{\nu}(z)$ are the modified Bessel functions of the first and second kind with special values given above. Then, apply the relations on these modified Bessel functions, noting that (see eq. 10.49 in \cite{OLV10}), $$I_{1/2}(z) = \frac{\sinh |z|}{\sqrt{\frac{\pi |z| }{2}}}; \qquad K_{1/2}(z) = \frac{e^{-|z|}}{\sqrt{\frac{\pi |z|}{2}}}$$
Using these expressions we evaluate:
$$j_m(z) = \frac{1}{\pi \sqrt{m}} \left( e^{2\pi i m x} \sinh(2 \pi m y) + a_n(1) e^{ - 2\pi m y} \right).$$
If $z \in \mcC_N^*$, $y = 0$, and thus $j_m(z) \asymp \frac{1}{\sqrt{m}} \ll 1$. Else if $z \in \mcH$, $y > 0$. Taking $m \rightarrow \infty$, 
and simplifying the above expression, we retrieve the desired result: \begin{align*}
\lim_{m \rightarrow \infty} j_m(z) 
&\asymp \frac{e^{2\pi m y}}{\sqrt{m}}.
\end{align*}
\end{proof}


\subsection{Regularized Integration}
We examine the growth of the regularized integral as in Proposition~\ref{p:c(m)}:
\begin{equation}\label{e:reg} R(m) = \int_{\mcF_N}^{reg} f_{\theta}(z) \cdot \xi_0(j_{N, m}(z)) dx dy. \end{equation}

First, we compute some cases where this term vanishes, so that the contribution from this term in the growth of $c_f(m)$ is $0$. The following lemma describes this.

\begin{lemma}\label{l:reg}
\begin{enumerate}
\item If the genus of $X_0(N)$ is 0, the regularized integral as defined above vanishes. 
\item If genus of $\Gamma_0(N)$ is 1, the regularized integral vanishes for infinitely many positive integers $m$.
\end{enumerate}
\end{lemma}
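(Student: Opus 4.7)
The plan hinges on the fact that the differential operator $\xi_0$ sends a weight $0$ weak harmonic Maass form on $\Gamma_0(N)$ to a cusp form in $S_2(\Gamma_0(N))$, a space of complex dimension equal to the genus $g_N$ of $X_0(N)$. In both parts the strategy is to show that $\xi_0(j_{N,m})$ vanishes identically, so that the integrand of $R(m)$ in \eqref{e:reg} is zero pointwise and hence $R(m) = 0$ regardless of the specific regularization scheme. For part (1) this is immediate: if $g_N = 0$ then $S_2(\Gamma_0(N)) = \{0\}$, so $\xi_0(j_{N,m}) = 0$ for every $m \ge 1$.

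For part (2), suppose $g_N = 1$ so that $S_2(\Gamma_0(N)) = \CC g$ for a unique (up to scalar) normalized Hecke eigenform $g(z) = \sum_{n \ge 1} a_g(n) q^n$. The form $g$ must be a newform at level $N$: if some proper divisor $M \mid N$ satisfied $\dim S_2(\Gamma_0(M))^{\text{new}} \ge 1$, then the distinct oldform embeddings $h(z) \mapsto h(dz)$ for $d \mid N/M$ would force $\dim S_2(\Gamma_0(N)) \ge 2$. Writing $\xi_0(j_{N,m}) = \lambda_m g$, the central step is to prove that $\lambda_m = 0$ if and only if $a_g(m) = 0$. I would establish this via a Bruinier--Funke-type pairing: for any $h \in S_2(\Gamma_0(N))$, unfolding the Poincaré-series definition of $F_{N,m}(z,s)$ against $h$ and analytically continuing to $s = 1$ yields
$$\bigl(h,\, \xi_0(j_{N,m})\bigr)_{\text{Pet}} \;=\; C_m \cdot a_h(m),$$
with a nonzero constant $C_m$ depending only on $m$ and $N$. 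Taking $h = g$ identifies $\lambda_m$ with a nonzero multiple of $a_g(m)$.

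To produce infinitely many $m$ with $\lambda_m = 0$, I would invoke the arithmetic of the elliptic curve attached to $g$. Since $g_N = 1$, the Jacobian $J_0(N)$ is an elliptic curve $E/\QQ$ of conductor $N$ whose Hecke eigenvalues at primes $p \nmid N$ satisfy $a_g(p) = p + 1 - \#E(\FF_p)$, and by Elkies' theorem every elliptic curve over $\QQ$ admits infinitely many supersingular primes. This produces infinitely many primes $p$ with $a_g(p) = 0$; for each such $p$ we have $\xi_0(j_{N,p}) \equiv 0$, so $R(p) = 0$.

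The main technical obstacle is the Petersson-pairing identity above: one must track the analytic continuation of the Maass--Poincaré series $F_{N,m}(z,s)$ carefully and justify unfolding at the boundary $s = 1$ of the region of absolute convergence, then match the resulting Bessel-function integrals against the Fourier coefficients of $h$. Once that computation is in place, part (1) collapses to a dimension count and part (2) reduces to Elkies' deep theorem on supersingular primes, which enters only as a black box.
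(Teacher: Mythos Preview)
Your proposal is correct and follows essentially the same route as the paper: both parts reduce to showing $\xi_0(j_{N,m})\equiv 0$, with part (1) a dimension count and part (2) Elkies' theorem applied to the elliptic curve attached to the unique normalized form spanning $S_2(\Gamma_0(N))$. The paper handles the identification of $\xi_0(j_{N,m})$ with a multiple of $a_g(m)\,g$ by citing a remark in \cite{CHOI10} rather than unfolding a Bruinier--Funke pairing as you do, but the arguments are otherwise the same.
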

\begin{proof}
Recall that $\xi_0(j_{N,d}(z))$ is a weight two cusp form. Suppose that the genus of $X_0(N)$ is zero. Then the space of cusp forms of weight two is trivial, so $\xi_0(j_{N, d}(z))$ must be $0$. It follows that the regularized integral vanishes.

Now let the genus of $X_0(N)$ be one. The following argument appears as a remark in \cite{CHOI10}. The space of weight two cusp forms of weight two is spanned by a unique normalized weight $2$ cusp form $g(z) = \sum_{n = 1}a(n)q^n$. There exists an elliptic curve $E_g$ of conductor dividing $N$ whose Hasse-Weil $L$-series coincides with $L$-function for $g(z)$. In other words, for all values of $p$ that do not divide $N$,
\[ p+1-a(p) = \#E_g/\FF_p. \]
If $E_g$ is supersingular at $p$, the number of points of $E_g/ \FF(p)$ is exactly $p+1$ and $a(p) = 0$. So we have that for an odd $m$, $j_{N, m}(z)$ identically vanishes if and only if $E_g$ is supersingular at some $p|m$. Since there exist infinitely many supersingular primes for every elliptic curve over $\QQ$ (see \cite{ELK87}), there are infinitely many positive integers $m$ such that $j_{N, m}(z)$ is holomorphic on $\mcH$ (i.e. $\xi_0 j \equiv 0$).
\end{proof}

This regularized integral $R(m)$ may not vanish if the genus of $X_0(N)$ is not $0$. Below we address the growth of $R(m)$. First, we characterize the growth of the Fourier cofficient $b_m(1,0)$ of the weight $0$ Poincaré series of index $m$.

\begin{lemma}\label{l:bcoeff}
Recall the weight $0$ and index $m$ Poincaré series given in Equation~\ref{e:poincare} with Fourier expansion
\[ F_{N,m}(z,s) = q^{-m} + \sum_{n \geq 0}b_m(n,1)q^n + \sum_{n > 0} b_m(-n, 1)e^{2\pi i n \overline{z}} .\]
Then $b_m(1,0) \ll  m^{1/4}e^{4\pi\sqrt{m}}$.
\end{lemma}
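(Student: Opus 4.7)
The plan is to invoke the explicit Fourier expansion of the weight-zero Poincar\'e series $F_{N,m}(z,1)$ from Niebur~\cite{NIE73}, in which the $n$th coefficient at the cusp at infinity (for $n \ge 1$) is given by a Kloosterman--Bessel series of the shape
\[
b_m(n,1) \;=\; 2\pi \sqrt{\tfrac{n}{m}} \sum_{\substack{c > 0 \\ N \mid c}} \frac{K(-m,n;c)}{c}\, I_1\!\left( \frac{4\pi \sqrt{mn}}{c} \right),
\]
where $K(-m,n;c)$ is the standard Kloosterman sum and $I_1$ is the modified Bessel function of order one. Specializing to $n=1$ reduces the proof to estimating this absolutely convergent sum.

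First I would apply Weil's bound $|K(-m,1;c)| \ll \tau(c)\, c^{1/2}$, which is valid because $\gcd(-m, 1, c) = 1$, to strip away the arithmetic content. Then I would use the two regimes of the modified Bessel function: $I_1(x) \ll x$ for $0 \le x \le 1$, and $I_1(x) \ll e^x/\sqrt{x}$ for $x \ge 1$. The natural place to split the $c$-sum is therefore at $c \asymp \sqrt{m}$.

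For the tail $c \gtrsim \sqrt m$, the Bessel factor lives in the polynomial regime and each summand is bounded by $\tau(c)\, c^{-3/2} \cdot \sqrt m / c$; this contribution is $O(m^{-1/2})$ and hence negligible. For the head $N \le c \lesssim \sqrt m$, the Bessel piece has size $(c/\sqrt m)^{1/2} e^{4\pi \sqrt m / c}$, and the exponential factor is maximized at the smallest admissible value, $c = N$. Since roughly doubling $c$ roughly halves the exponent $4\pi \sqrt m / c$, the sum over the head is geometrically dominated by its first term. Combining with the $m^{-1/2}$ prefactor yields an estimate of order $m^{-3/4} e^{4\pi \sqrt m / N}$, which sits comfortably below the claimed $m^{1/4} e^{4\pi \sqrt m}$.

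The main obstacle is purely technical: handling the Bessel asymptotics uniformly across the transition $c \asymp \sqrt m$, and checking that the weighted divisor sum $\sum_c \tau(c) e^{4\pi \sqrt m /c}$ is absorbed by its $c = N$ term. Neither step is delicate, and the wide margin between the true size and the stated bound means that a crude Weil-type estimate, rather than any sharper $\mathrm{GL}_2$ input, already suffices.
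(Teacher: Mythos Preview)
Your approach is essentially the same as the paper's: both start from the explicit Kloosterman--Bessel expansion of the Fourier coefficient (the paper cites Theorem~8.4 of \cite{ONO09}), apply Weil's bound to the Kloosterman sums, and then feed in the large-argument asymptotic $I_1(x)\sim e^x/\sqrt{2\pi x}$. Your treatment is in fact more careful than the paper's, which does not explicitly split the $c$-sum or justify why the smallest admissible $c$ dominates; you also observe, correctly, that the resulting bound $m^{-3/4}e^{4\pi\sqrt{m}/N}$ is considerably sharper than the stated $m^{1/4}e^{4\pi\sqrt{m}}$ (the paper's coarser exponent arises from a different prefactor normalization, $2\pi m^{1/2}$ in place of your $2\pi m^{-1/2}$, but either way the lemma follows).
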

\begin{proof}
Theorem 8.4 in \cite{ONO09} gives the following expansion of $b_m(1,0)$:
\[b_m(1,0) = 2\pi\left(m\right)^{1/2} \sum_{c>0, N|c} \frac{K_0(-m, 1, c)}{c}I_1\left( \frac{4\pi \sqrt{|m|}}{c}\right).\]
where $K_0(-m, n, c)$ is the Kloosterman sum defined as $$K(a, b; m) = \sum_{0 \le d \le m - 1, \gcd(d, m) = 1} e^{\frac{2\pi i}{m}(ad + bd^*)}$$ where $d^*$ is the multiplicative inverse of $d$ modulo $m$. Weil's bound gives 
\[b_m(1,0) \ll m^{1/2} \sum_{c>0, N|c} \frac{c^{1/2 + \epsilon}}{c}I_1\left( \frac{4\pi \sqrt{|m|}}{c}\right). \]
Now we expand the modified Bessel function $I_1\left( \frac{2\pi \sqrt{|m|}}{c} \right)$.
\[I_1\left( \frac{4\pi\sqrt{|m|}}{c} \right) = \frac{4\pi \sqrt{|m|}}{c} \sum_{k = 0}^\infty \frac{(\pi^2 |m|/c^2)^k}{k!(1+k)!}.\]
We can now examine the growth $I_1$ of $b_m(1,0)$ as $m \rightarrow \infty$. The Bessel function is approximated by
\[I_1\left( \frac{4\pi\sqrt{|m|}}{c} \right) \sim \frac{e^{(4\pi\sqrt{|m|})/c}}{\sqrt{8 \pi^2 \sqrt{|m|}/c}} \] for sufficiently large $m$ (see 10.41 in \cite{OLV10}).
Then as $m \rightarrow \infty$, we obtain
\[b_m(1,0) \ll m^{1/4}e^{4\pi\sqrt{m}}. \]
\end{proof}

\begin{lemma}\label{l:reggrowth}
Consider the regularized integral in~\ref{e:reg}. Suppose that the set of zeros of $f(z)$ in a chosen fundamental domain is $\{z_j = x_j + iy_j\}_{j = 1, \dots, r}$ with $y_1 \leq \dots \leq y_r$, $r \geq 1$. As $m \rightarrow \infty$ we can bound $R(m)$ as: \[R(m) \ll \frac{e^{2\pi m y_r}}{\sqrt{m}}.\]
\end{lemma}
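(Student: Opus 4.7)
The plan is to show that the regularized integral $R(m)$ is captured to leading order by residue contributions at the interior poles of $f_\theta$ inside $\mcF_N$ (which are precisely the zeros $z_j$ of $f$), so that its growth rate inherits the pointwise growth of $j_{N,m}$ at the zero $z_r$ with the largest imaginary part. The meromorphic weight-two form $f_\theta$ has its only poles on $\mcH$ at $z_1, \dots, z_r$ with pole order $\nu_{z_j}^{(N)}(f)$ (the Eisenstein piece of $f_\theta$ is holomorphic on $\mcH$), while $\xi_0(j_{N,m})$ is a weight-two cusp form; this is exactly the setup for a Bruinier--Funke-style pairing that underlies the formula stated in Proposition \ref{p:c(m)}.

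First, I would apply Stokes' theorem on $\mcF_N$ with small disks excised around each $z_j$ and combine this with the regularization scheme from \cite{CHOI10} to obtain a decomposition
$$R(m) = \sum_{j=1}^{r} C_j \cdot \nu_{z_j}^{(N)}(f) \cdot j_{N,m}(z_j) + B(m),$$
where the constants $C_j$ depend only on $\Gamma_0(N)$ (with a correction at elliptic fixed points), and $B(m)$ collects the cusp boundary contributions, expressible via the Fourier coefficients $b_m(n,1)$ of $F_{N,m}(z,1)$ from Lemma \ref{l:hwm}. Lemma \ref{l:jgrowth} then gives $j_{N,m}(z_j) \asymp e^{2\pi m y_j}/\sqrt{m}$, so the residue sum is dominated by its $j = r$ term and has size $O(e^{2\pi m y_r}/\sqrt{m})$. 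For $B(m)$, Lemma \ref{l:bcoeff} yields $b_m(1,0) \ll m^{1/4} e^{4\pi \sqrt{m}}$, with analogous subexponential bounds at the other cusps; since $4\pi\sqrt{m} = o(2\pi m y_r)$ as $m \to \infty$ (using $y_r > 0$ for an interior zero), the cusp term $B(m)$ is absorbed into the residue term, giving the claimed bound $R(m) \ll e^{2\pi m y_r}/\sqrt{m}$.

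The main obstacle will be justifying the Stokes / residue identification in a manner compatible with the specific regularization used in \cite{CHOI10}: one must verify that the limit of the contour integrals around shrinking disks at each $z_j$ really produces the factor $j_{N,m}(z_j)$ weighted by the pole order, rather than more complicated local data involving derivatives of $j_{N,m}$ at $z_j$. A secondary technical point is confirming that the cusp boundary terms genuinely involve only the well-behaved Fourier coefficients $b_m(n,1)$ for small $n$, which should follow from the rapid decay of the cusp form $\xi_0(j_{N,m})$ at each cusp together with the explicit Fourier expansion in Lemma \ref{l:hwm}.
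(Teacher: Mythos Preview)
Your proposal is correct and follows essentially the same route as the paper: the paper invokes Lemma~3.1 of \cite{CHOI10} to obtain precisely the decomposition you propose to derive via Stokes' theorem, namely a residue sum over the poles of $f_\theta$ (the zeros $z_j$ of $f$) weighted by $j_{N,m}(z_j)$, plus boundary terms $b_m(1,0)a(0)$, $a(m)$, and a sum over the finite cusps, and then bounds each piece exactly as you outline using Lemmas~\ref{l:jgrowth} and~\ref{l:bcoeff}. One small terminological slip: the poles of $f_\theta$ at $z_j$ are simple with \emph{residue} $\nu_{z_j}^{(N)}(f)$, not pole order $\nu_{z_j}^{(N)}(f)$, though your displayed formula is correct as written.
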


\begin{proof}
The function $f_\theta(z)$ is a meromorphic modular form of weight two on $\Gamma_0(N)$, which is holomorphic at each cusp and each pole is simple. Thus, it satisfies the conditions for Lemma 3.1 in \cite{CHOI10}, giving the following expansion for $R(m)$.
\begin{align}
R(m) &= \lim_{\epsilon \rightarrow 0} \int_{\mcF_N(f_\theta, \epsilon)} f_\theta(z) \cdot \xi_0(j_{N, m}(z))dxdy \\
&= b_m(1,0)a(0) + a(m) + \sum_{t \in \mcC_N^*} \alpha_t f_\theta(t) j_{N,m}(t)
+ \sum_{t \in S(f_\theta)} \frac{2\pi i}{\ell_t} \text{Res}_t(f_\theta) j_{N,m}(t) \label{e:rexp}
\end{align}
where $\alpha_t$ and $\ell_t$ are constants depending on $t$ defined in \cite{CHOI10}.
Lemma~\ref{l:bcoeff} shows that term $b_m(1,0)a(0) \ll m^{1/4}e^{4\pi\sqrt{m}}$. Moreover, the term $a(m)$ is negligible, since the coefficients of the modular form $f(z)$ has polynomial growth in $m$.  
Now, the cusps in $\mcC_N^*$ are rational points. From the proof of Lemma~\ref{l:jgrowth}, we have 
\begin{align*}
\lim_{m \rightarrow \infty} j_m(z) &\asymp \frac{e^{2\pi m y_r}}{\sqrt{m}}.
\end{align*}
For each $t \in \mcC_N^*$, $\im(t) = 0$, so this gives 
\begin{align}\label{term3}\sum_{t \in \mcC_N^*} \alpha_t f_\theta(t) j_{N,m}(t) \asymp \frac{1}{\sqrt{m}} . \end{align}
Now we examine the last term in this expression, a sum over the poles of $f_\theta(z)$. These are the zeros of $f(z)$. Since the $m$-dependence of this term also comes from $j_{N,m}(t)$, the growth of this term is dominated by the zeros of $f(z)$. Thus we have
\begin{align} \sum_{t \in S(f_\theta)} \frac{2\pi i}{\ell_t} \text{Res}_t(f_\theta)j_{N,m}(t) \asymp \frac{e^{2\pi i m y_r}}{\sqrt{m}} . \end{align}
as in the proof of Lemma~\ref{l:jgrowth}. This dominates the term in ~\eqref{term3}, so the growth of $R(m)$ is dictated by the above. Thus, given the integral expansion in ~\eqref{e:rexp}, $$R(m) \ll \frac{e^{2\pi m y_r}}{\sqrt{m}}.$$
\end{proof}



\subsection{Proof of Results}
We begin by proving the second theorem since many of the arguments used to prove the first theorem parallel the ones employed here:

\begin{proof}[Proof of Theorem~\ref{t:mainThm}]
From Proposition~\ref{p:c(m)}, we obtain $$c_f(m) = \frac{1}{m} \sum_{d |m} \mu \left( \frac{m}{d} \right) \left( \sum_{z \in \mcF_N \cup \mcC_N^{*}} \nu_{z}^{(N)} (f(z)) j_{N, d}(z) - \int_{\mcF_N}^{reg} f_{\theta}(z) \cdot \xi_0(j_{N, d}(z)) dx dy \right) $$ $$ +   \begin{cases} 
      \, \frac{2Nk - 24h}{N - 1}    \hfill & N \hspace{-6pt} \not | m \\
      \quad 2k \hfill & N \hspace{1pt} | m. \\
  \end{cases}$$
According to Lemma~\ref{l:reg}, since the genus of $X_0(N)$ is $0$ or $1$, the regularized integral defined in Section 3.2 vanishes infinitely often. Thus, for infinitely many $m$, $$c_f(m) = \Omega \left(  \frac{1}{m} \sum_{d |m} \mu \left( \frac{m}{d} \right) \left( \sum_{z \in \mcF_N \cup \mcC_N^{*}} \nu_{z}^{(N)} (f(z)) j_{N, d}(z) \right) +   \begin{cases} 
      \, \frac{2Nk - 24h}{N - 1}    \hfill & N \hspace{-5pt}\not | m \\
      \quad 2k \hfill & N | m. \\
  \end{cases} \right)$$ 
  
Note that the term depending on whether $N | m$ or not does not grow with $m$ and is bounded with respect to $m$. Then, since $m$ dominates the other divisors of $m$ as $m \rightarrow \infty$, we evaluate the expression $\sum_{z \in \mcF_N \cup \mcC_N^{*}} \nu_{z}^{(N)} (f(z)) j_{N, d}(z)$ only at $d = m$. Further $ \nu_{z}^{(N)} $ is a constant that does not depend on $m$. Thus we arrive at
$$c_f(m) = \Omega \left( \frac{1}{m}\sum_{z \in \mcF_N \cup \mcC_N^*} j_{N, m}(z) \right).$$ 

By Lemma~\ref{l:jgrowth}, we have $j_{N,m}(z) \asymp \frac{e^{2\pi m y}}{\sqrt{m}}$ for $z = x+iy$ in $\mcF_N \cup \mcC_N^*$. Then, the dominating term of the sum is the term with $z = z_r$, giving
$$c_f(m) = \Omega \left( \dfrac{e^{2\pi m y_r}}{m^{3/2}} \right). $$ 
\end{proof}

Now, suppose we relax the conditions on the genus of $X_0(N)$. We now prove the upper bound on $c_f(m)$ in general given in Theorem~\ref{c:upperBound}.

\begin{proof}[Proof of Theorem~\ref{c:upperBound}]
We examine the worst case growth of the equation
$$c_f(m) = \frac{1}{m} \sum_{d |m} \mu \left( \frac{m}{d} \right) \left( \sum_{z \in \mcF_N \cup \mcC_N^{*}} \nu_{z}^{(N)} (f(z)) j_{N, d}(z) - \int_{\mcF_N}^{reg} f_{\theta}(z) \cdot \xi_0(j_{N, d}(z)) dx dy \right) $$ $$ +   \begin{cases} 
      \, \frac{2Nk - 24h}{N - 1}    \hfill & N \hspace{-6pt} \not |\hspace{1pt} m \\
      \quad 2k \hfill & N \hspace{1pt}|\hspace{1pt} m. \\
  \end{cases}$$
  
  Like above, the term depending on whether $N|m$ or not is bounded as $m \rightarrow \infty$. Evaluating the remaining expression at $d = m$ we have
  \[ c_f(m) \ll \frac{1}{m}\left(\sum_{z \in \mcF_N \cup \mcC_N^{*}} \nu_{z}^{(N)} (f(z)) j_{N, m}(z) - \int_{\mcF_N}^{reg} f_{\theta}(z) \cdot \xi_0(j_{N, m}(z)) dx dy\right) + O(1).\]
  In the worst case, the regularized integral is $O(\frac{e^{2\pi m y_r}}{\sqrt{m}})$, by Lemma~\ref{l:reggrowth}. Combining this with Lemma~\ref{l:jgrowth}, we obtain
  \[ c_f(m) \ll \frac{1}{m}\left(\frac{e^{2\pi m y_r}}{\sqrt{m}} \right).\]
\end{proof}

Finally, we see that the bounds yielded by our calculation are consistent with earlier work in \cite{KOH05}.

\begin{proof}[Proof of Corollary~\ref{c:kohnen}]
Note that as $m \rightarrow \infty$, we have the bound $$c_f(m) \asymp \frac{1}{m} \sum_{d |m} \left( \sum_{z \in \mcF_N \cup \mcC_N^{*}} \nu_{z}^{(N)} (f(z)) j_{N, d}(z) - \int_{\mcF_N}^{reg} f_{\theta}(z) \cdot \xi_0(j_{N, d}(z)) dx dy \right) + O(1)$$
Since $f(z)$ has no roots or poles on the upper half plane, the first term in the above summation, $$\sum_{z \in \mcF_N \cup \mcC_N^{*}} \nu_{z}^{(N)} (f(z)) j_{N, d}(z)$$ reduces to $$\sum_{z \in \mcC_N^{*}} \nu_{z}^{(N)} (f(z)) j_{N, d}(z).$$
Here we exclude the cusp at $\infty$. By Lemma~\ref{l:jgrowth}, the above expression grows asymptotically proportional to $e^{2\pi m y_r}/\sqrt{m}$, where $y_r$ is the largest imaginary part of a summand with nonzero order. However, all of the cusps $z \not\sim \infty$ are on the rational line and thus have imaginary part $0$. Consequently, $$\sum_{z \in \mcC_N^{*}} \nu_{z}^{(N)} (f(z)) j_{N, d}(z) \asymp_{m \rightarrow \infty} \frac{1}{\sqrt{m}} \rightarrow 0.$$
Now consider the regularized integral. As per Theorem 1.4 in \cite{CHOI10}, $$f_{\theta}(z)_d + \int_{\mcF_N}^{reg} f_{\theta}(z)  \cdot \xi_0(j_{N, d}(z)) dx dy = \sum_{z \in \mcC_N^{*}} \nu_{z}^{(N)} (f(z)) j_{N, d}(z) $$ where $f_{\theta}(z)_d$ is the coefficient of $q^d$ in the Fourier expansion of $f_{\theta}(z)$. Using the above bound on the cusp summation, $$\int_{\mcF_N}^{reg} f_{\theta}(z) \cdot \xi_0(j_{N, d}(z)) dx dy \ll f_{\theta}(z)_d.$$
Now recall the definition $$f_{\theta}(z) = \frac{\theta f}{f} + \frac{k/12 - h}{N - 1} \cdot N E_2(Nz) + \frac{h - Nk/12}{N- 1}E_2(z)$$
Since $\frac{\theta f}{f}$ is holomorphic and quasi-modular of weight $2$, the coefficients of the Fourier expansion of $\frac{\theta f}{f}$ can be bounded by $(\frac{\theta f}{f})_d \ll d \log d$, (see \cite{MAA83}). Given the definition of the Eisenstein series, we can also bound the coefficients of $E_2(z)$ (and $\sigma_1(d)$ as in \cite{HAR79}): $$E_2(z)_d \asymp \sigma_1(d) \ll n \log \log n.$$
Combining these two bounds gives $$\int_{\mcF_N}^{reg} f_{\theta}(z) \cdot \xi_0(j_{N, d}(z)) dx dy \ll f_{\theta}(z)_d \ll d \log d + d \log \log d \ll d \log d.$$
Then, we can once again apply $\sigma_1(n) \ll n \log \log n$ to the summation over $d|m$ to compute the desired bound for $c_f(m)$: $$c_f(m) \ll \frac{1}{m} ((m \log \log m) \cdot \log m) + O(1) \ll \log m \cdot \log \log m .$$

\end{proof}


\end{document}